\newtheorem{thm}{Theorem}[section]
\newtheorem{lem}[thm]{Lemma}
\theoremstyle{definition}
\newtheorem{defn}{Definition}[section]
\newtheorem*{problem*}{Problem}
\theoremstyle{remark}
\numberwithin{equation}{section}
\DeclareMathOperator*{\res}{res}
\begin{document}

\title[Multidimensional algebraic interpolations]{Multidimensional algebraic interpolations
}

\author{M.E. Durakov}

\address[Matvey Durakov]
{Siberian Federal University
                                                 \\
         pr. Svobodnyi 79
                                                 \\
         660041 Krasnoyarsk
                                                 \\
         Russia}
\email{durakov\_m\_1997@mail.ru}

\author{E.D. Leinartas}

\address[Evgeniy Leinartas]
{Siberian Federal University
                                                 \\
         pr. Svobodnyi 79
                                                 \\
         660041 Krasnoyarsk
                                                 \\
         Russia}
\email{elyaynartas@sfu-kras.ru}

\author{A.K. Tsikh}

\address[August Tsikh]
{Siberian Federal University
                                                 \\
         pr. Svobodnyi 79
                                                 \\
         660041 Krasnoyarsk
                                                 \\
         Russia}
\email{atsikh@sfu-kras.ru}

\subjclass {41A05, 32A27}
\keywords{Grothendieck residue, Interpolation, Local algebra}

\begin{abstract}
The Hermite interpolation formulas are based on the interpretation of interpolation nodes as roots of suitable polynomials. Therefore, such formulas belong to the class of algebraic interpolations. The article considers a multidimensional variant of Hermite interpolation, presents a class of algebraic systems of equations for which the Hermite interpolation polynomial is represented by an explicit formula. The theory of multidimensional residues is used as the main tool.
\end{abstract}

\maketitle

\section*{Introduction}
\label{s.Int}

The Lagrange and Hermite interpolation formulas are based on the interpretation of interpolation nodes as roots of suitable polynomials. Therefore, such formulas belong to the class of algebraic interpolations. The ideology of algebraic interpolations has recently received close attention from the point of view of interpolation theory for functions in several variables (see, for example \cite{Alpay}-\cite{newAY}).

The article deals with the interpolation of functions in a complex space for which the sets of nodes are 0-dimensional analytic sets. In the one-dimensional case, these are Hermite interpolations. As the main apparatus, we use the theory of multidimensional residues.

Basic classical (standard) interpolations are understood as Lagrange, Hermite, Newton, etc. interpolations. Let's consider the first two of them.

\begin{problem*}[Lagrange]
Given a set of distinct points $\{w_j\}_{j=1}^m \subset \mathbb{C}$ and the values $c_j \in \mathbb{C}$, find the polynomial $f(z)$ of degree $m-1$ with the property $$f(w_j)=c_j,\quad j = 1, \ldots, m.$$
\end{problem*}

Note that the interpolation polynomial $f$ is defined in terms of the polynomial $p(z)=(z-w_1)\cdot\ldots\cdot(z-w_m)$ by the formula: $$f(z)=p(z)\sum\limits_{j=1}^m\frac{c_j}{z-w_j}\res\limits_{w_j}\left(\frac{1}{p}\right).$$
Thus, specifying the interpolation nodes in the form of the null set of the polynomial $p$ provides tools for constructing an interpolation polynomial by using residues. More general is the following

\begin{problem*}[Hermite]
Let $\{w_j\}_{j=1}^m\subset\mathbb{C}$ be a set of pairwise distinct points and the following values are given $$c_{j, \ell} \in \mathbb{C}, \quad\text{where } j = 1, \ldots, m,\quad\ell = 0, \ldots, \mu_j\,-\,1.$$ It is necessary to find a polynomial $f(z)$ of minimal degree, which at points $w_j$ has the given values of derivatives up to orders of $\mu_j-1$ inclusive, that is
\begin{equation}\label{asd}f^{(\ell)}(w_j)=c_{j, \ell}, \quad j = 1, \ldots, m, \quad \ell = 0, \ldots, \mu_j-1.\end{equation}
\end{problem*}

In the Hermite interpolation problem, it is advisable to enumerate the set of points $w_j$ taking into account their multiplicities, thereby considering the set $\{w_j\}_{j=1}^m$ as an algebraic set $p^{-1}(0)$, where \begin{equation}\label{polynomial} p(z) = (z-w_1)^{\mu_1}\cdot\ldots\cdot(z-w_m)^{\mu_m}.\end{equation}
The Hermite interpolation polynomial can be represented as

\begin{equation*}
\sum\limits_{j=1}^m  \frac{p(z)}{ (z-w_j)^{\mu_j}} \sum\limits_{\ell=0}^{\mu_j-1}\frac{c_{j, \ell}}{\ell !}\left(\sum\limits_{s=0}^{\mu_j-\ell-1}  (z-w_j)^{\ell+s} \res\limits_{w_j}\left(\frac{(z-w_j)^{\mu_j-1-s}}{p}\right)   \right),
\end{equation*}
that is, again with the participation of residues.

Our goal is to construct a similar variant for multidimensional interpolation. First, we will explain the main points of constructing an interpolation polynomial in the case of Lagrange interpolation. Let $\boldsymbol{p} = (p_1, \ldots, p_n)$ be a system of $n$ polynomials of $n$ variables $z= (z_1, \ldots, z_n)$ with a finite set of roots $\boldsymbol{p}^{-1}(0)\subset\mathbb{C}^n$. Assume that all roots are simple, this means that the Jacobian $J_{\boldsymbol{p}}$ of the system (mapping) $\boldsymbol{p}$ is not equal to zero at the roots:
$$J_{\boldsymbol{p}}(w)\neq 0, \quad \forall w\in \boldsymbol{p}^{-1}(0).$$

\begin{problem*}[Multidimensional Lagrange Interpolation]
For the given values $\{c_{w}\},$
$w\in\boldsymbol{p}^{-1}(0)$, find the polynomial $f(z)$ with the condition
\begin{equation}\label{one}
f(w)=c_{w}, \qquad w\in \boldsymbol{p}^{-1}(0).
\end{equation}
\end{problem*}

In the one-dimensional case, we can rewrite the Lagrange polynomial in the following form:
$$f(z) = \sum\limits_{w\in p^{-1}(0)} c_{w}\frac{p(z)}{z-w}\res\limits_{w}\left(\frac{1}{p}\right).$$
Therefore, we need to somehow interpret the division of the ideal $\langle p\rangle$, generated by the polynomial $p$ in the ring $\mathbb{C}[z]$, by its primary component $\langle z-w\rangle$ at the root of $w$ see \cite{VDV}. The polynomial $H(z, w)$ from the following decomposition is taken as such an interpretation
$$p(z)-p(\zeta) = (z - \zeta)H(z, \zeta).$$
So the Lagrange formula takes the form:
$$f(z) = \sum\limits_{w\in p^{-1}(0)} c_{w} H(z, w) \res\limits_{w}\left(\frac{1}{p}\right).$$

In the multidimensional case there is a system of identities
$$p_j(z)-p_j(\zeta) = \sum\limits_{k=1}^n (z_k-\zeta_k)h_{jk}(z, \zeta), \qquad j = 1,\ldots, n$$
with a polynomial matrix $H(z, \zeta) = (h_{jk}(z,\zeta)).$
It is obvious that on the diagonal $\zeta = z$ the matrix $(h_{jk})$ coincides with the Jacobi matrix of the mapping $\boldsymbol{p}.$ At the roots of $w\in\boldsymbol{p}^{-1}(0)$, the determinant $\det{H(z,w)}$ has the following properties:
\begin{itemize}
\item$\det{H(w,w)}$ is equal to the Jacobian $J_{\boldsymbol{p}}(w)$ of the mapping $\boldsymbol{p}$;
\item $\det{H(w, w')} = 0$ for each pair of different roots $w, w' \in \boldsymbol{p}^{-1}(0).$
\end{itemize}
For the second property, see Lemma 4.1 in section 4.
Local residue (its exact definition is given in section 2) at the simple root $w\in\boldsymbol{p}^{-1}(0)$, that is at the root where $J_{\boldsymbol{p}}(w) \neq 0$, is calculated by the formula (\cite{Tsikh}, section 5.5):
$$\res\limits_{w} \left(\frac{h}{\boldsymbol{p}^I}\right) = \frac{h(w)}{J_{\boldsymbol{p}}(w)}.$$

From these properties we get the following result:

\begin{thm}
The polynomial
$$f(z) = \sum_{w \in \boldsymbol{p}^{-1}(0)} c_{w} \det{H(z, w)} \res\limits_{w}\left(\frac{1}{\boldsymbol{p}^I}\right),$$
where $\res\limits_{w}\left(\frac{1}{\boldsymbol{p}^I}\right)$ is a local residue, solves the problem (\ref{one})
\end{thm}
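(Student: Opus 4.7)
The plan is a direct substitution argument: evaluate the candidate polynomial $f$ at an arbitrary node $w^{*}\in\boldsymbol{p}^{-1}(0)$ and check that the sum collapses to the single prescribed value $c_{w^{*}}$. All the ingredients required for this collapse are already stated above the theorem, so the proof should amount to putting them together in the right order.

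First I would substitute $z=w^{*}$ into the formula, obtaining
$$f(w^{*})=\sum_{w\in\boldsymbol{p}^{-1}(0)} c_{w}\,\det H(w^{*},w)\,\res_{w}\!\left(\frac{1}{\boldsymbol{p}^{I}}\right).$$
Then I would invoke the second bulleted property of $\det H$ (the statement that $\det H(w,w')=0$ whenever $w\neq w'$, proved later as Lemma~4.1) to kill every term with $w\neq w^{*}$. Only the diagonal term survives, so
$$f(w^{*})=c_{w^{*}}\,\det H(w^{*},w^{*})\,\res_{w^{*}}\!\left(\frac{1}{\boldsymbol{p}^{I}}\right).$$

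Next I would apply the first bulleted property, $\det H(w^{*},w^{*})=J_{\boldsymbol{p}}(w^{*})$, together with the local residue formula at a simple root quoted from \cite{Tsikh}, namely $\res_{w^{*}}(1/\boldsymbol{p}^{I})=1/J_{\boldsymbol{p}}(w^{*})$, which is well-defined because of the standing assumption that all roots are simple (so $J_{\boldsymbol{p}}(w^{*})\neq 0$). These two identities together reduce the remaining term to exactly $c_{w^{*}}$, which is the required interpolation condition (\ref{one}). Finally I would note that $f$ is a polynomial in $z$, since $\det H(z,w)$ is polynomial in both arguments and the sum is finite.

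There is no real obstacle: the nontrivial content sits in the two bulleted properties of $\det H$ and in the residue formula, all of which are either cited or postponed to Lemma~4.1. The only thing one should perhaps emphasize in the write-up is the role of the simplicity assumption $J_{\boldsymbol{p}}(w)\neq 0$, which is what makes the local residue of $1/\boldsymbol{p}^{I}$ behave like a reciprocal Jacobian and thus matches the Lagrange pattern from the one-dimensional case.
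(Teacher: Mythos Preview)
Your proposal is correct and matches the paper's approach exactly: the paper simply states that the theorem follows ``from these properties,'' meaning the two bulleted facts about $\det H$ and the residue formula at a simple root, and your write-up is precisely the straightforward substitution argument that unpacks this remark.
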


\section{Multidimensional variant of Hermite interpolation}
\label{s.1}

Let's consider a multidimensional analogue of the Hermite problem.\linebreak  Let $\boldsymbol{p} = (p_1, \ldots, p_n)$ be a sequence of $n$ polynomials of $n$ variables $z= (z_1, \ldots, z_n)$ with a finite number of common roots $\boldsymbol{p}^{-1}(0)$. Obviously, $\boldsymbol{p}$ can be interpreted as mapping $\boldsymbol{p}: \mathbb{C}^n\to \mathbb{C}^n$. At the same time, $\boldsymbol{p}$ is associated with the ideals $\langle\boldsymbol{p}\rangle$ and $\langle \boldsymbol{p} \rangle_w$ generated by the sequence $p_j$ both in the ring $\mathbb{C}[z_1, \ldots, z_n],$ and in the local rings $\mathcal{O}_w$ of germs of holomorphic functions at the roots $w\in\boldsymbol{p}^{-1}(0)$. The quotient ring $\mathcal{O}_{w}/\langle\boldsymbol{p}\rangle_w$ has the structure of a vector space and is called a local algebra at the point $w$. Denote by $B_w$ the monomial basis of the local algebra $\mathcal{O}_{w}/\langle\boldsymbol{p}\rangle_w$ at the point $w\in \boldsymbol{p}^{-1}(0)$. The dimension of this algebra coincides with the multiplicity of the mapping $\boldsymbol{p}$ at the root of $w$ (see \cite{Tsikh}, section 19.4). The set of exponents $\ell$ in basic monomials $(z-w)^\ell\in B_w$ we denote as $A_w$.

\begin{problem*}[Multidimensional Hermite interpolation]
For the given values $\{c_{w, \ell}\}$, $w\in\boldsymbol{p}^{-1}(0); \ell \in A_{w}$, find the polynomial $f(z)$ with the condition
$$\frac{\partial^{|\ell|} f}{\partial z^\ell}(w)=c_{w, \ell}, \qquad w\in \boldsymbol{p}^{-1}(0), \qquad \ell \in A_{w},$$
where $\ell = (\ell_1, \ldots, \ell_n)$  and  $|\ell| = \ell_1+\ldots+\ell_n$.
\end{problem*}

Here we will consider a multidimensional version of the above problem in the case when the Newton polyhedra of the supports $A_{w}$ are essentially parallelepipeds $$\{0\leq\ell_1\leq d_1-1\}\times\ldots\times \{0\leq\ell_n\leq d_n-1\}.$$

To formulate the main theorem, we need the following definition of the\linebreak Grothendieck residue.

\begin{defn}{(see \cite{GH} or \cite{Tsikh})}
Let $h, p_1, \ldots,p_n$ be the holomorphic functions in the neighborhood $U_a$ of the point $a\in\mathbb{C}^n$, such that the mapping $\boldsymbol{p}=(p_1,\ldots,p_n)$ has an isolated zero at the point $a$: $\boldsymbol{p}^{-1}(0)\cap U_a = \{a\}$. By local residue or by Grothendieck residue of the meromorphic form $\omega=h\,dz/(p_1\cdots p_n)$ at the point $a$ we will call the integral $$\res\limits_{a}\omega=(2\pi i)^{-n}\int\limits_{\Gamma_a}\frac{h(z)\,dz_1\wedge\ldots\wedge dz_n}{p_1(z)\cdots p_n(z)},$$
over the cycle $$\Gamma_a=\{z\in U_a: |p_j(z)|=\varepsilon, j = 1, \ldots, n\},$$ where $\varepsilon$ is quite small, such that $\Gamma_a \Subset U_a;$ the orientation of the cycle $\Gamma_a$ is determined by the condition $$d(\arg p_1)\wedge\ldots\wedge d(\arg p_n)\geq 0.$$
\end{defn}

For the local residue, we will also use the notation $$\res\limits_a\left(\frac{h}{\boldsymbol{p}^I}\right),$$
assuming that $I = (1, \ldots, 1) \in \mathbb{Z}^n_+,$ and therefore the product $p_1\ldots p_n$ in the denominator of the integral is nothing but a monomial $\boldsymbol{p}^I = p_1\ldots p_n$. The main properties of local residues and methods of their calculations are given in \cite{GH}, \cite{Tsikh}, \cite{TY} and \cite{Tajima}.

%Пусть $Q: U_a \rightarrow \mathbb{C}^n$ --- голоморфное отображение окрестности $U_a \subset \mathbb{C}^n,$ причем $Q(a)=0$. Точка \b{\b{$a$}} называется {\it простым нулем} отображения $Q$, если Якобиан $J_{Q} = \frac{\partial Q}{\partial z}$ отличен от нуля в точке \b{\b{$a$}}. Из теоремы о неявных функциях следует, что простой нуль является изолированным. Если $J_Q(a) = 0$, то \b{\b{$a$}} называют {\it кратным нулем} отображения $Q$.

Note that in the one-dimensional Hermite problem, in each root $w = w_j$ of a polynomial (\ref{polynomial}), it is required to restore $\mu_j$ values of derivatives (\ref{asd}), where $\mu_j$ is the multiplicity of the root $w_j$. In a multidimensional problem, the situation is the same. Recall (see \cite{Tsikh} or \cite{Milnor}) that the multiplicity of the isolated zero $w\in\mathbb{C}^n$ of the germ of the holomorphic map $\boldsymbol{p}:(\mathbb{C}^n, w)\to(\mathbb{C}^n, 0)$ is defined as the limit $$\mu_w = \overline{\lim\limits_{\xi\to w}} \# \{U_w \cap \boldsymbol{p}^{-1}(\xi)\},$$
where $\#$ is the power sign (cardinal) of the set, and $U_w$ is the neighborhood of a point, in the closure of which there are no roots of $\boldsymbol{p}$ other than $w$. The multiplicity $\mu_w$ coincides with the degree of mapping $z\to \boldsymbol{p}(z) / ||\boldsymbol{p}(z)||$ from the boundary $\partial U_w$ to the unit sphere (see \cite{Milnor}, Lemma B2).

In the one-dimensional case $(n=1)$, there are two equivalent definitions of multiplicity:
\begin{enumerate}
\item $z=w$ is a zero of multiplicity $d$ of the function $p(z)$ if function could be represented as
$$p(z) = (z-w)^{d} \varphi(z), \qquad \varphi(w) \neq 0;$$
\item $z=w$ is a zero of multiplicity $d$ of the function $p(z)$ if at the point $w$ the function $p$ and its derivatives up to the order of $d$ have the property
$$p(w) = p'(w) = \ldots = p^{(d-1)}(w) = 0, p^d(w) \neq 0.$$
\end{enumerate}

In a multidimensional situation $(n\geq 2)$ the concepts of multiplicity and order of zero differ: the first of them applies to the maps $\mathbb{C}^n\to \mathbb{C}^n$, and the second to the maps $\mathbb{C}^n\to \mathbb{C}^1$ (i.e. to functions). The concept of multiplicity is quite complex, there is no universal formula for its calculation. In order to solve the Hermite problem constructively (in the form of an explicit formula) we will consider the class introduced in the article \cite{LeTs} of polynomial maps $\boldsymbol{p}: \mathbb{C}^n\to\mathbb{C}^n$ which in each root $w\in\boldsymbol{p}^{-1}(0)$ have properties similar to the one-dimensional definition (ii). To describe these properties, the inequality $a\leq b$ of vectors $a, b\in\mathbb{Z}^n$ will be understood as the system of inequalities $a_i\leq b_i$ for coordinates of these vectors. Denote by $I$ the vector $(1, \ldots, 1) \in\mathbb{Z}^n$. The order of zero $w\in\boldsymbol{p}^{-1}(0)$ is the vector $d_w = (d_1, \ldots, d_n) \in\mathbb{Z}_{+}^n$ such that

\begin{equation}\label{first}
\frac{\partial^{|\ell|} p_i}{\partial z^\ell}(w) = 0, \  0\leq \ell \leq d_w -I,
\end{equation}

\begin{equation}\label{second}
\det\left|\left|\frac{\partial^{d_{k}} p_i}{\partial z_k^{d_{k}}}(w)\right|\right| \neq 0\quad (\text{ here } i, k = 1,\ldots, n).
\end{equation}

\begin{thm}\label{mainth}
Suppose that for each root $w\in\boldsymbol{p}^{-1}(0)$ there is such a vector  \linebreak $d_w = (d_1, \ldots, d_n) \in\mathbb{Z}_{+}^n$, that the properties (\ref{first}) and (\ref{second}) are being fulfilled.
Then the polynomial
\begin{equation}\label{int}
f(z) = \sum_{w \in \boldsymbol{p}^{-1}(0)} \det {H_w(z)} \left[ \sum_{\substack{\ell \leq d_w-I \\ k \leq d_w-I-\ell} } \frac{c_{w, \ell}}{\ell!}  (z-w)^{\ell+k}\res\limits_{w}\left(\frac{(z-w)^{d_w -I-k}}{\boldsymbol{p}^I}\right) \right]
\end{equation}
solves the multidimensional Hermite problem with nodes $w\in\boldsymbol{p}^{-1}(0)$, where \linebreak $H_w(z)~=~||h_{ik}||$ is a matrix from the representation
\begin{equation}\label{decomp}
\begin{pmatrix}
p_1(z)\\
\vdots\\
p_n(z)
\end{pmatrix}=
||h_{ik}||
\begin{pmatrix}
(z_1-w_{1})^{d_{1}}\\
\vdots\\
(z_n-w_{n})^{d_{n}}
\end{pmatrix}.
\end{equation}
\end{thm}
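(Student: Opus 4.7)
My plan is to reduce the theorem to two local facts at each node $w^*\in\boldsymbol{p}^{-1}(0)$: (A) the $w^*$-indexed summand of (\ref{int}) reproduces all prescribed Taylor data at $w^*$; and (B) for every other root $w\neq w^*$, the $w$-indexed summand has vanishing partial derivatives at $w^*$ up to multi-order $d_{w^*}-I$.

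For (A), I first recognize the residues in (\ref{int}) as Taylor coefficients. Since $\det H_w(w)\neq 0$ by (\ref{second}), the transformation law for Grothendieck residues applied to $\boldsymbol{p}=H_w\cdot\boldsymbol{Q}_w$ with $\boldsymbol{Q}_w(z)=((z_k-w_k)^{d_k})_k$ gives
$$\res_w\!\left(\frac{(z-w)^{d_w-I-k}}{\boldsymbol{p}^I}\right)=\res_w\!\left(\frac{1}{\det H_w(z)\,(z-w)^{I+k}}\right),$$
which is the $k$-th Taylor coefficient of $\phi_w(z):=1/\det H_w(z)$ at $w$. Writing $P_w(z):=\sum_{\ell\leq d_w-I}(c_{w,\ell}/\ell!)(z-w)^\ell$, the inner bracket of (\ref{int}) equals the Taylor truncation $[P_w\phi_w]_{\leq d_w-I}$, so the $w$-indexed summand has the form
$$T_w(z)\;=\;\det H_w(z)\cdot[P_w\phi_w]_{\leq d_w-I}\;=\;P_w(z)\;-\;\det H_w(z)\cdot[P_w\phi_w]_{>d_w-I},$$
using $\det H_w\cdot\phi_w\equiv 1$. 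The remainder lies in the local ideal $\mathfrak{a}_w=\langle(z_i-w_i)^{d_{w,i}}\rangle$; a Leibniz-rule check shows that any element of $\mathfrak{a}_w$ is killed by $\partial^{\ell^*}/\partial z^{\ell^*}$ at $z=w$ for $\ell^*\leq d_w-I$, so $\partial^{\ell^*}T_w/\partial z^{\ell^*}(w)=\partial^{\ell^*}P_w/\partial z^{\ell^*}(w)=c_{w,\ell^*}$, establishing (A).

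For (B), I would prove the Hermite extension of Lemma~4.1: $\det H_w\in\mathfrak{a}_{w^*}$ whenever $w\neq w^*$, after which the same Leibniz argument forces $\partial^{\ell^*}T_w/\partial z^{\ell^*}(w^*)=0$ for $\ell^*\leq d_{w^*}-I$. The argument: the decomposition (\ref{decomp}) at $w^*$, combined with invertibility of $H_{w^*}(w^*)$, yields $\langle\boldsymbol{p}\rangle_{w^*}=\mathfrak{a}_{w^*}$; the same decomposition at $w$ reads $H_w(z)\,v_w(z)=\boldsymbol{p}(z)\in\mathfrak{a}_{w^*}^n$ with $v_w(z)=((z_k-w_k)^{d_{w,k}})_k$; Cramer's rule applied in $\mathcal{O}_{w^*}$ gives $\det H_w(z)\cdot(v_w)_k(z)\in\mathfrak{a}_{w^*}$ for every $k$; and since $w\neq w^*$ some coordinate $k_0$ has $w_{k_0}\neq w^*_{k_0}$, making $(v_w)_{k_0}(z)$ a unit in $\mathcal{O}_{w^*}$, whence $\det H_w\in\mathfrak{a}_{w^*}$.

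The main obstacle I expect is the correct direction of the residue transformation (keeping $\det H_w$ in the denominator of the transformed residue, as dictated by $\boldsymbol{Q}_w=H_w^{-1}\boldsymbol{p}$) and the multi-index bookkeeping of the truncation $[\cdot]_{\leq d_w-I}$; once these are pinned down, the rest reduces to a clean two-line differentiation argument at each node.
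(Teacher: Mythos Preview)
Your proposal is correct and follows essentially the same approach as the paper: the residue transformation formula to identify the residues in (\ref{int}) with Taylor coefficients of $1/\det H_w$, and a Cramer's rule argument (the content of the paper's Lemma~\ref{germs}) to show $\det H_w$ lies in the local ideal at every other root. The only cosmetic differences are that the paper treats each $\ell$ separately, obtaining the form $1+(\text{element of }\langle(z-w)^{d_w-\ell}\rangle)$ for the multiplier of $\frac{c_{w,\ell}}{\ell!}(z-w)^\ell$, whereas you sum over $\ell$ first and write the whole $w$-summand as $P_w-\det H_w\cdot[P_w\phi_w]_{>d_w-I}$; and for (B) the paper appeals directly to $\det H_w\in\langle\boldsymbol{p}\rangle_{w'}$ and uses condition~(\ref{first}) via Leibniz, while you first pass through the equality $\langle\boldsymbol{p}\rangle_{w^*}=\mathfrak{a}_{w^*}$.
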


Note that the existence of the representation (\ref{decomp}) will be proved by the lemma \ref{lem} below. We also note that when $w$ in (\ref{int}) is fixed, then the set $\{0\leq\ell\leq d_w-I\}$ of values of $\ell$ in (\ref{int}) means that $\ell$ runs over the parrallelepiped $A_w$ mentioned in statement of Hermite problem. So the cardinal $\# A_w = d_1 \cdot\ldots\cdot d_n$ and it is equal to the multiplisity $\mu_w = \mu_w(p)$ of the root $w$ when $p$ satisfies properties (\ref{first}) and (\ref{second}) (see \cite{LeTs}).

Consider the following example in $\mathbb{C}^2$, which satisfies the conditions of the theorem:
\begin{equation*}
\begin{cases}
p_1(z) = (z_1-1)^2+(z_2-1)^2+\frac{1}{3}(z_1-1)^2 (z_2-1)^2 \\
p_2(z) = (z_1-1)^2-(z_2-1)^2+\frac{2}{3}(z_1-1)^2 (z_2-1)^2.
\end{cases}
\end{equation*}
This mapping has five different roots: one of them $w_0 = (1, 1)$ of multiplicity 4 and four simple roots $w_j = (1\pm\sqrt{6}, 1\pm i\sqrt{2}), j = 1, 2, 3, 4$. The monomial basis of a local algebra at the root $w_0$ consists of four monomials: $1, z_1, z_2, z_1 z_2$. Therefore, we need to find a polynomial $f(z)$ such that:
$$f(w_0) = c_{0,(0,0)}, f'_{z_1}(w_0) = c_{0,(1,0)}, f'_{z_2}(w_0) = c_{0,(0,1)}, f''_{z_1 z_2}(w_0) = c_{0,(1,1)},$$
$$f(w_1) = c_1, f(w_2) = c_2, f(w_3) = c_3, f(w_4) = c_4.$$

Let's find the matrix $H_{w_0}(z)$ from the condition:

\begin{multline*}
\begin{pmatrix}
p_1(z) \\
p_2(z)
\end{pmatrix} = 
\begin{pmatrix}
(z_1-1)^2+(z_2-1)^2+\frac{1}{3}(z_1-1)^2 (z_2-1)^2 \\
(z_1-1)^2-(z_2-1)^2+\frac{2}{3}(z_1-1)^2 (z_2-1)^2
\end{pmatrix}=\\
\begin{pmatrix}
(z_1-1)^2\left(1+\frac{1}{3}(z_2-1)^2\right)+(z_2-1)^2 \\
(z_1-1)^2\left(1+\frac{2}{3}(z_2-1)^2\right)-(z_2-1)^2
\end{pmatrix} = \\
\begin{pmatrix}
1+\frac{1}{3}(z_2-1)^2, &1\\
1+\frac{2}{3}(z_2-1)^2, &-1
\end{pmatrix}
\begin{pmatrix}
(z_1-1)^2 \\
(z_2-1)^2
\end{pmatrix} = H_{w_0}(z)
\begin{pmatrix}
(z_1-1)^2 \\
(z_2-1)^2
\end{pmatrix}.
\end{multline*}
Thus $\det{H_{w_0}(z)} = -2-(z_2-1)^2$. 
For $h = \sum\limits_{|\alpha|\geq 0} h_\alpha(z-w_0)^\alpha\in \mathcal{O}_{w_0}$, by the transformation formula of the local residue for mapping from $\boldsymbol{p}$ to $(H_{w_0}(z))^{-1}\boldsymbol{p}$, we~get (see \cite{Tsikh}, section 5.5)
$${\res\limits_{w_0}}\left(\frac{h}{p_1 p_2}\right) = {\res\limits_{w_0}}\left(\frac{h/ \det H_{w_0}(z)}{(z_1-1)^2(z_2-1)^2}\right) = -\frac{1}{2}h_{11}.$$
Since $d_{w_0} = (2, 2)$, we get that in (\ref{int}) the residues at $w_0$ are different from zero only when $k = (0, 0)$. Thus, the total contribution at the root $w=w_0$ in the formula (\ref{int}) is represented by a polynomial of the fourth degree
\begin{equation*}
f_0(z) = \left(-2-(z_2-1)^2\right)\left(\frac{-1}{2}\right)\sum\limits_{\ell \in \{0, 1\}^2}c_{0, \ell}(z_1-1)^{\ell_1}(z_2-1)^{\ell_2}
\end{equation*}

For simple zeros, $w_j\in \boldsymbol{p}^{-1}(0)$ vector $d_{w_j} = (1, 1),$ therefore, the contributions they define to (\ref{int}) are given by one term for $\ell =k = (0, 0):$
$$f_j(z) = \det{H_{w_j}(z)}\cdot c_j\cdot\res\limits_{w_j}\left(\frac{1}{\boldsymbol{p}^I}\right)=c_j\frac{\det{H_{w_j}(z)}}{J_{\boldsymbol{p}}(w_j)}, \quad j = 1,2,3,4,$$
where $J_{\boldsymbol{p}}(w_j)$ is a value of the Jacobian of the mapping $\boldsymbol{p}$ at the point $w_j$. \linebreak For example, at the root $w_1 = (1+\sqrt{6}, 1+i\sqrt{2})$ determinant $\det{H_{w_1}(z)}$ \linebreak is equal to
\begin{multline*}
- z_{1} z_{2}^{3} + z_{1} z_{2}^{2}(3- \sqrt{2} i)+z_{1} z_{2}(- 3 + 2 \sqrt{2} i) + z_{1}(1 - \sqrt{2} i) +z_{2}^{3}(1- \sqrt{6}) +\\
z_{2}^{2}(3 \sqrt{6}-3 - 2 \sqrt{3} i+\sqrt{2} i) + z_{2}(3- 3 \sqrt{6} - 2 \sqrt{2} i+ 4 \sqrt{3} i) +(- 1 + \sqrt{6} - 2 \sqrt{3} i + \sqrt{2} i)
\end{multline*}

Calculations show that
$$\det{H_{w_1}(w_1)} = J_{p}(w_1) = 16 i\sqrt{3},$$
so contribution $f_1(z) = -i c_1[\det{H_{w_1}(z)}]/16\sqrt{3}.$

\section{Structure of local ideals $\langle\boldsymbol{p}\rangle_w$ under conditions (\ref{first}) and (\ref{second})}

Recall that the mapping $\boldsymbol{p} = (p_1, \ldots, p_n)$ at each point $w\in\boldsymbol{p}^{-1}(0)$ defines the ideal $\langle\boldsymbol{p}\rangle_w$ of the ring of germs $\mathcal{O}_w$ generated by $p_1, \ldots, p_n$. For brevity, we will assume that $w=0$.
\begin{lem}\label{lem}
If the conditions (\ref{first}), (\ref{second}) are met for the germs $p_1,\ldots, p_n\in \mathcal{O}_0$,
%\begin{equation}\label{eq1}
%\frac{\partial^{\,{\boldsymbol{\alpha}}} Q_i}{\partial z^{\boldsymbol{\alpha}}}(0) = 0, \qquad 0 \leq \alpha_i \leq d_i - 1, \qquad i = 1,\ldots,n,
%\end{equation}
%
%\begin{equation}\label{eq2}
%\det\left[
%\frac{\partial^{\,d_j} Q_i}{\partial z_j^{d_j}}(0)
%\right] \neq 0.
%\end{equation}
then in the ring $\mathcal{O}_0$ there is equality of ideals:
\begin{equation}
\langle p_1, \ldots, p_n \rangle = \langle z_1^{d_1}, \ldots, z_n^{d_n} \rangle.
\end{equation}

\end{lem}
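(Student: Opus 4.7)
The plan is to establish the two inclusions separately, the second one via a Weierstrass-type decomposition whose constant term is forced by condition (\ref{second}).

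For the inclusion $\langle p_1,\ldots,p_n\rangle \subseteq \langle z_1^{d_1},\ldots,z_n^{d_n}\rangle$, I would expand each $p_i$ in its Taylor series at $0$. Condition (\ref{first}) says that every Taylor coefficient $\tfrac{1}{\ell!}\,\partial^{|\ell|}p_i/\partial z^\ell(0)$ with $\ell \le d_w - I$ is zero. Hence every monomial $z^\alpha$ actually occurring in $p_i$ must satisfy $\alpha_k \ge d_k$ for at least one index $k$, so it is divisible by $z_k^{d_k}$. Summing, $p_i \in \langle z_1^{d_1},\ldots,z_n^{d_n}\rangle$.

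For the reverse inclusion, the idea is to promote this observation to an explicit matrix identity. For each monomial $z^\alpha$ in the Taylor expansion of $p_i$ I would fix an index $k(\alpha)$ with $\alpha_{k(\alpha)} \ge d_{k(\alpha)}$, choosing it so that the pure monomial $z_k^{d_k}$ itself is assigned to the $k$-th slot (for example, by taking the smallest admissible $k$), and put the contribution $z^{\alpha - d_{k(\alpha)} e_{k(\alpha)}} \cdot z_{k(\alpha)}^{d_{k(\alpha)}}$ into the $k(\alpha)$-th summand. This produces a representation
$$p_i(z) = \sum_{k=1}^{n} h_{ik}(z)\, z_k^{d_k}$$
with holomorphic germs $h_{ik}\in\mathcal{O}_0$; standard majorant estimates guarantee convergence. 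By the chosen allocation, $h_{ik}(0)$ equals the coefficient of the pure monomial $z_k^{d_k}$ in $p_i$, that is $h_{ik}(0) = \tfrac{1}{d_k!}\,\partial^{d_k}p_i/\partial z_k^{d_k}(0)$.

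To conclude, assemble $H(z) = (h_{ik}(z))$. Its value $H(0)$ differs from the matrix in (\ref{second}) only by column rescalings $1/d_k!$, so condition (\ref{second}) gives $\det H(0) \neq 0$. Therefore $H(z)$ is invertible in the local ring $\mathcal{O}_0$, and inverting the vector identity $(p_i)^\top = H(z)(z_k^{d_k})^\top$ exhibits each $z_k^{d_k}$ as an $\mathcal{O}_0$-linear combination of $p_1,\ldots,p_n$, giving the opposite inclusion. This $H(z)$ is simultaneously the matrix appearing in the decomposition (\ref{decomp}) required by Theorem \ref{mainth}. The only delicate step is the bookkeeping that distributes monomials among the $h_{ik}$ in a way compatible with (\ref{second}); once that is fixed, both inclusions follow from routine manipulations.
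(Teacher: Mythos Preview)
Your argument is correct and is genuinely different from the paper's. Both proofs handle the inclusion $\langle p_i\rangle\subseteq\langle z_k^{d_k}\rangle$ the same way, by reading off from condition~(\ref{first}) that every monomial of $p_i$ is divisible by some $z_k^{d_k}$. The divergence is in the reverse inclusion. The paper first row-reduces by the constant matrix $M^{-1}$ to $P_i=z_i^{d_i}+g_i$, and then, instead of inverting a matrix, invokes the local duality theorem: $z_i^{d_i}\in\langle P_1,\ldots,P_n\rangle$ iff $\res_0\!\big(z_i^{d_i}h/P^I\big)=0$ for all $h\in\mathcal{O}_0$. It verifies this vanishing by expanding $1/P_j$ as a geometric series on a weighted torus and running a combinatorial pigeonhole count on how many factors $z_k^{d_k}$ must divide each surviving monomial.

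Your route is more elementary and shorter: once you allocate monomials so that the pure power $z_k^{d_k}$ lands in slot $k$, the identity $h_{ik}(0)=\tfrac{1}{d_k!}\,\partial_{z_k}^{\,d_k}p_i(0)$ is immediate, condition~(\ref{second}) makes $H(0)$ invertible, and Cramer's rule in $\mathcal{O}_0$ finishes. As a bonus you obtain the matrix $H_w(z)$ of~(\ref{decomp}) for free. The paper's approach, by contrast, keeps the argument inside the Grothendieck-residue framework that drives the rest of the article, and in effect rehearses the duality machinery that is used again in the proof of Theorem~\ref{mainth}; but purely for Lemma~\ref{lem} your matrix-inversion proof is the cleaner one. (In fact the paper's own normalization $P_i=z_i^{d_i}+g_i$ already gives $f_{ik}(0)=\delta_{ik}$ in its decomposition $P_i=\sum_k f_{ik}z_k^{d_k}$, so the same inversion would have worked there too.)
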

\begin{proof}
Consider in $\mathbb{R}^n$ a simplex $S$ stretched over its vertices $v_j = (0, \ldots, d_j, \ldots, 0)$, $j =1,\ldots, n.$
From the condition (\ref{first}) it follows that the slices $p_i$ on $S$ constitute a set of initial weighted homogeneous polynomials for $p_i$:
$$(p_i)_{*} = m_{i 1}z_1^{d_1} + \ldots + m_{i n}z_n^{d_n}, \qquad i = 1, \ldots, n,$$
with weight $\delta = (\delta_1,\ldots, \delta_n)$, where $\delta_j = 1/d_j$. 
By condition (\ref{second}), the matrix $M = (m_{i j})$ is non-degenerate. By converting $M^{-1}$, the system $p_1, \ldots, p_n$ is reduced to the form $$P_i =z_i^{d_i}+g_i(z), \quad i =1, \ldots, n,$$
where each monomial $z^\beta$ of the germ $g_i$ has $\delta$--weighted degree of $\langle\beta, \delta\rangle$ is greater than that of the monome $z_i^{d_i}$. In this case , the equalities are fulfilled
\begin{equation}\label{34}
\frac{\partial^{\boldsymbol{\alpha}} P_i}{\partial z^{\boldsymbol{\alpha}}}(0) = 0
\text{\ \ for all \ \ } \boldsymbol{0}\leq \boldsymbol{\alpha} \leq \boldsymbol{d}-\boldsymbol{I}.
\end{equation}
Due to the non-degeneracy of $M$, there is equality of ideals
\begin{equation*}
\langle Q_1, \ldots, Q_n \rangle = \langle P_1, \ldots, P_n \rangle.
\end{equation*}
And from now on we will prove the equality of ideals
\begin{equation}\label{star}
\langle P_1, \ldots, P_n \rangle = \langle z_1^{d_1}, \ldots, z_n^{d_n} \rangle.
\end{equation}
%Для начала докажем прямое включение. Так как для каждого мультииндекса \linebreak $\boldsymbol{\alpha}: \boldsymbol{0}\leq \boldsymbol{\alpha} \leq \boldsymbol{d}-\boldsymbol{I}$ имеет место равенство
%$$\frac{\partial^{\boldsymbol{\alpha}} P_i}{\partial z^{\boldsymbol{\alpha}}}(0) = 0,$$

It follows from the equalities (\ref{34}) that any monomial of $P_i$ is divisible by at least one element from the set $\{z_1^{d_1}, \ldots, z_n^{d_n}\}.$ Thus,
$$P_i= z_1^{d_1}\cdot f_{i 1}+z_2^{d_2}\cdot f_{i 2} +\ldots +z_n^{d_n}\cdot f_{i_n}.$$
Therefore, there is direct inclusion of ideals in (\ref{star}). 

Let's prove the inverse inclusion, that is, that each $z_i^{d_i}$ belongs to the left part (\ref{star}). To do this, we will use the local duality theorem from the theory of multidimensional residues (see \cite{Tsikh}, section 5.5). According to this theorem, $z_i^{d_i}\in \langle P_1, \ldots, P_n\rangle$ if and only if
$${\res\limits_0}\left(\frac{z_i^{d_i} h}{P^I}\right) = 0, \qquad \forall h \in \mathcal{O}_0.$$
Without limiting generality, we will conduct a proof for $i=1$ and first consider the case when $h = 1$:
\begin{equation}\label{resid}
{\res\limits_0}\left(\frac{z_1^{d_1}}{P^I}\right) = \frac{1}{(2\pi i)^n}\int\limits_\Gamma \frac{z_1^{d_1} dz}{P_1(z)\cdots P_n(z)},
\end{equation}
where $\Gamma = \Gamma_0 = \{z\in U_0: |P_j(z)| = \varepsilon_j, j = 1,\ldots,n \}.$

Note that outside the set of complex hypersurfaces \\
$\{P_1(z) = 0\}, \ldots, \{P_n(z) = 0\}$ the cycle $\Gamma_0$ is homologous to the torus
$$\gamma_0 = \{|z_1| = r^{\delta_1}, \ldots, |z_n| = r^{\delta_n} \}, \qquad r << 1.$$
In fact, since the monomials of germs $g_i$ have $\delta$--weighted degree greater than $z_i^{d_i}$, then $$\left.|z_i^{d_i}|\right|_{\gamma_0} =r>o_i(r) = \left.|g_i(z)|\right|_{\gamma_0},$$
where $o_i(r)$ are infinitesimal quantities relative to $r$. Therefore $P_i = z_i^{d_i}+ g_i \neq 0$ on $\gamma_0$, and according to (\cite{YA}, Lemma 4.9) for small $r$, the cycle $\Gamma_0$ is homologous to the cycle $\gamma_0$ in $U_0\backslash \{P_1 \cdot\ldots\cdot P_n = 0\}.$ Returning to the local residue (\ref{resid}), we will study it as an integral over the cycle $\gamma_0$:

\begin{equation*}
\begin{split}
 &\frac{1}{(2\pi i)^n}\int\limits_{\gamma_0} \frac{z_1^{d_1} dz}{(z_1^{d_1} + g_1(z))\cdots (z_n^{d_n} + g_n(z))} =\\
&= \frac{1}{(2\pi i)^n}\int\limits_{\gamma_0} \frac{z_1^{d_1} dz}{z_1^{d_1}\cdots z_n^{d_n}(1 + \frac{g_1(z)}{z_1^{d_1}})\cdots (1+ \frac{g_n(z)}{z_n^{d_n}})} = \\
&=\frac{1}{(2\pi i)^n}\int\limits_{\gamma_0} \sum\limits_{\boldsymbol{k}\geq \boldsymbol{0}}\frac{\left(-\frac{g_1(z)}{z_1^{d_1}}\right)^{k_1}\cdots \left(-\frac{g_n(z)}{z_n^{d_n}}\right)^{k_n} dz}{z_2^{d_2}\cdots z_n^{d_n}} = \\
&=\frac{1}{(2\pi i)^n} \sum\limits_{\boldsymbol{k}\geq \boldsymbol{0}} \int\limits_{\gamma_0} \frac{(-1)^{|\boldsymbol{k}|}g_1^{k_1}\cdots g_n^{k_n} dz}{z_1^{d_1 k_1} z_2^{d_2(k_2+1)}\cdots z_n^{d_n(k_n+1	)}}.
\end{split}
\end{equation*}
%Если мы распишем числитель на сумму мономов, а затем поменяем сумму и интеграл, то каждый такой интеграл от дроби не будет равен нулю только в том случае, когда под интегралом стоит дробь вида $\frac{c}{z_1\cdots z_n}$. Если же мы избавимся
%
Here, the second equality is performed based on the geometric progression formula.
The last integrals give a nonzero contribution only for such monomials $z^\alpha$ in the numerator, for which
$$\alpha = (d_1 k_1 -1, d_2(k_2+1) -1, \ldots, d_n(k_n+1) -1).$$
A monomial from the multiplier $g_i^{k_i}$ can be obtained only from the product of $k_i$ monomials of the germ $g_i$. Therefore, we have $|k| := k_1+k_2+\ldots+k_n$ monoms of our germs, from which the numerator is obtained. But each monomial of the germ, as we showed above (based on the property (\ref{34})), is divided by at least one element from the set $\{z_1^{d_1}, \ldots, z_n^{d_n}\}.$ The monomials that are divisible by $z_1^{d_1}$ must be less than $k_1$, because otherwise in the numerator the power of $z_1$ will be greater than or equal to $d_1 k_1$, but we should get $d_1 k_1 -1$. Therefore, monomials that are divided into some element of the set $\{z_2^{d_2}, z_3^{d_3}, \ldots, z_n^{d_n}\}$, we have at least $k_2+k_3+\ldots+k_n+1$. Now suppose that for any $i \in \{2, \ldots, n\}$ the number of monomials divisible by $z_i^{d_i}$ is not more than $k_i$. In this case, the monomials that are divided into some element of the set $\{z_2^{d_2}, z_3^{d_3}, \ldots, z_n^{d_n}\}$, we have no more than $k_2+k_3+\ldots+k_n$. We get a contradiction, therefore there exists such $i$ that the monomials that are divisible by $z_i^{d_i}$ are greater than $k_i$. But in this case, in the numerator we will get a power of $z_i$ not less than $d_i(k_i+1)$. And this is more than the required degree $d_i(k_i+1)-1$. Thus, in any case, for some variable, the final degree will be greater than $-1$, and this will lead to the zeroing of the integral (\ref{resid}) corresponding to $h=1$.

Multiplying the numerator $z_1^{d_1}$ in the integral (\ref{resid}) by some irreversible germ $h$ will increase its order of zero at the point $z= 0$, so $${\res\limits_0}\left(\frac{z_1^{d_1}h}{P^I}\right) = 0, \qquad \forall h \in\mathcal{O}_0.$$
This is exactly what needed to be shown.

\end{proof}

\section{Proof of the theorem \ref{mainth}}

To prove the main theorem, we will need, along with the lemma \ref{lem}, the following statement.

\begin{lem}({\cite{Tsikh}, section 5.5}) \label{germs}
Let $q, g: G\to\mathbb{C}^n$ be two holomorphic maps of the domain $G \subset \mathbb{C}^n$ associated by the relation $g= Aq$ with the functional matrix $A =||a_{ij}(z)|/$, $a_{ij}\in\mathcal{O}(G).$ If for some point $w\in G$
$$q(w)\neq 0, g(w) = 0,$$ then $\det A$ belongs to the ideal $I_w(g)\subset\mathcal{O}_w$ generated in the ring of germs of holomorphic functions by the system $g=(g_1, \ldots, g_n).$
\end{lem}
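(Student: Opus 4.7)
The plan is to exploit the classical adjugate (cofactor) identity $\mathrm{adj}(A)\cdot A = \det(A)\cdot \mathrm{Id}$, which holds over any commutative ring and hence, entry-wise, over the ring $\mathcal{O}(G)$. Multiplying the relation $g = Aq$ on the left by $\mathrm{adj}(A)$ yields the vector identity $\mathrm{adj}(A)\,g = \det(A)\,q$ on $G$. Reading off the $j$-th coordinate gives
\[
\det(A)\cdot q_j \;=\; \sum_{k=1}^{n} \bigl(\mathrm{adj}(A)\bigr)_{jk}\, g_k, \qquad j = 1,\ldots,n,
\]
so, passing to germs at $w$, each product $\det(A)\cdot q_j$ already lies in the ideal $I_w(g)$.

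To finish I would invoke the hypothesis $q(w)\neq 0$, which, for a vector-valued map, means that at least one coordinate $q_{j_0}$ satisfies $q_{j_0}(w)\neq 0$. Then $q_{j_0}$ is a unit of the local ring $\mathcal{O}_w$ and admits a reciprocal germ $1/q_{j_0}$. Multiplying the identity for $j=j_0$ by this reciprocal expresses
\[
\det(A) \;=\; \frac{1}{q_{j_0}}\sum_{k=1}^{n} \bigl(\mathrm{adj}(A)\bigr)_{j_0 k}\, g_k \quad \text{in } \mathcal{O}_w,
\]
which exhibits $\det(A)$ as an $\mathcal{O}_w$-linear combination of $g_1,\ldots,g_n$, i.e. $\det A \in I_w(g)$, as required.

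There is essentially no analytic obstacle here: every manipulation is algebraic, and the single step that genuinely uses germs rather than global holomorphic functions is the inversion of $q_{j_0}$, which is licensed precisely by $q_{j_0}(w)\neq 0$. The hypothesis $g(w)=0$ is not used in the manipulation itself; it merely guarantees that the conclusion is non-vacuous, since $A(w)q(w)=0$ with $q(w)\neq 0$ forces $\det A(w)=0$, consistent with $\det A$ lying in an ideal whose generators all vanish at $w$.
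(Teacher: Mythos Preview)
Your argument is correct. The adjugate identity $\mathrm{adj}(A)\,A=\det(A)\,\mathrm{Id}$ holds over any commutative ring, so applying it to the relation $g=Aq$ in $\mathcal{O}(G)$ gives $\det(A)\,q_j\in I_w(g)$ for every $j$; since some coordinate $q_{j_0}$ is a unit in $\mathcal{O}_w$, dividing by it yields $\det A\in I_w(g)$. Your observation that the hypothesis $g(w)=0$ plays no role in the manipulation (it only rules out the trivial case $I_w(g)=\mathcal{O}_w$) is also accurate.

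As for comparison: the paper does not supply its own proof of this lemma --- it is simply quoted from \cite{Tsikh}, section~5.5, and used as a tool in the proof of Theorem~\ref{mainth}. The argument you give is in fact the standard one (it is essentially Cramer's rule read as an identity in the ring of germs), and it is the proof one finds in the cited reference.
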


\begin{proof}[Proof of the theorem \ref{mainth}]

If in the lemma \ref{germs} we put $g = p, q =(z-w)^{d_{w}}$, then we get that for $w\in p^{-1}(0)$ the determinant $ \det{H_w(z)}$ belongs to the local ideals $I_{w'}(p), w'\in p^{-1}(0)\backslash\{w\}$. Therefore, these ideals will also have an whole term equal to the product of $\det{H_w(z)}$ by the square bracket in (\ref{int}). Thus, this term will be vanished at points $w'$ under the action of the corresponding differential operators $\left\{\frac{\partial^{|\ell|}}{\partial z^\ell}\right\}_{\ell\in A_{w}}$ from the condition $\ref{first}$. In other words, this term will not give a contribution for the remaining points.

Now we show that it gives the necessary contribution at the point $z=w$. To do this, first learn how to calculate local residues $\res\limits_w\left(1/\boldsymbol{p}^I\right)$ relative to our mapping $\boldsymbol{p}$. The following local residue transformation formula will help us (see \cite{Tsikh}, section 5.5) when we are moving from the mapping $q$ to the mapping $g= Aq$: if $w$ is an isolated zero of the mappings $q$ and $g$, then for any germ $\varphi \in\mathcal{O}_w$

$${\res\limits_w}\left(\frac{\varphi}{q^I}\right) = {\res\limits_w}\left(\frac{\varphi \det A}{g^I} \right).$$

In our case, based on the decomposition (\ref{decomp}), we have $$q = p, g= ((z-w_1)^{d_{w_1}}, \ldots, (z-w_n)^{d_{w_n}}), A = {H_w^{-1}(z)}$$. Therefore, according to the transformation formula , we get:
$${\res\limits_{w}}\left( \frac{\varphi}{p^I} \right) = {\res\limits_{w}}\left(\frac{\varphi \det H_w^{-1}(z)}{g^I}\right) = {\res\limits_{w}}\left(\frac{\varphi}{g^I \det H_w(z)}\right).$$

Residue relative to $g$ is easily calculated: for a germ $\varphi = \sum\limits_{|\alpha|\geq 0} \varphi_\alpha(z-w)^\alpha$ it is equal to the coefficient $\varphi_\alpha$ with the index $\alpha = {(d_{w}-I)}$. Therefore, it is enough for us to decompose the germ $\frac{1}{\det H_w(z)}$ into a Taylor series centered at $z = w$:
$$\frac{1}{\det H_w(z)} = \sum\limits_{|\alpha|\geq0} h_{w,\alpha} (z_1-w_{1})^{\alpha_1}\ldots (z_n-w_{n})^{\alpha_n}.$$

Next, we show that for a fixed $\ell$ the multiplier for $\frac{c_{w,\ell}}{\ell!}(z_1-w_{1})^{\ell_1}\cdot (z_n-w_{n})^{\ell_n}$ in (\ref{int}) has the form
\begin{equation}\label{vid}
1+\text{ an element from the ideal } \langle\bold{(z-w)^{d_w-\boldsymbol{\ell}}}\rangle,
\end{equation}
where $\langle\bold{(z-w)^{d_w-\boldsymbol{\ell}}}\rangle$ is an ideal generated by the system $$(z_1-w_1)^{d_1-\ell_1}, \ldots, (z_n-w_n)^{d_n-\ell_n}.$$ This will just mean that this term will only give a contribution to the derivative of~a fixed order $\ell$, all other derivatives will vanish due to the residual term. Indeed, since $g^I = (z_1-w_1)^{d_1}\ldots(z_n-w_n)^{d_n}$, according to the Cauchy formula, the residue in (\ref{int}) is $h_{w,k}$, so this multiplier is equal to

\begin{multline*}
 \det {H_w(z)} \cdot
\sum_{k \leq d_{w} -I-\ell} (z-w)^k \res_{w}\left( \frac{(z-w)^{d_{w} -I-k}}{g^I \det{H_w(z)}}\right) =\\
 \det {H_w(z)} \cdot
  \sum_{k \leq d_{w} -I-\ell} h_{w, k} (z-w)^k  =  \\
 \det {H_w(z)} \cdot \left(\frac{1}{\det H_w(z)} + \text{ an element from the ideal } \langle\bold{(z-w)^{d_w-\boldsymbol{\ell}}}\rangle \right),
\end{multline*}
that is, it has the form (\ref{vid}).

When multiplying the remainder in (\ref{vid}) by our multiplier $$\frac{c_{w,\ell}}{\ell!}(z_1-w_{1})^{\ell_1}\cdots (z_n-w_{n})^{\ell_n}$$ we will get an element from the ideal $\langle\bold{(z-w)^{d_w-\boldsymbol{\ell}}}\rangle$. All derivatives of the orders ${\ell'} \in A_{w}\backslash \{\ell\}$ will be zeroed on this element at the point $w$.
The derivative of the order $\ell$ will be equal to $c_{w, \ell}$, which was required.

\end{proof}

\smallskip
{\bf Acknowlegement.} 
The investigation was supported by the Russian Science Foundation, grant No. 20-11-20117

\end{document}